\documentclass[x11names,11pt,twoside]{amsart}

\usepackage[utf8]{inputenc}
\usepackage[english]{babel}
\usepackage{geometry, verbatim, url}
\usepackage{paralist, graphics, graphicx}
\usepackage{hyperref}
\usepackage{amsmath, amsthm, amssymb, amsfonts}
\usepackage{fancyhdr}
\usepackage{color}

\theoremstyle{plain}
\newtheorem{theorem}{Theorem}[]
\newtheorem{proposition}[theorem]{Proposition}

\newtheorem{lemma}[theorem]{Lemma}
\theoremstyle{definition}

\newtheorem{question}[theorem]{Question}

\newtheorem{remark}[theorem]{Remark}
\newtheorem{maintheorem}[]{Theorem}

\newtheorem{conjecture}[]{Conjecture}

\geometry{verbose,tmargin=2.5cm,lmargin=2cm,rmargin=2cm}
\headsep 4mm

\date{}



\fancyhf{}
\fancyhead[RO,LE]{\small\thepage}

\pagenumbering{arabic}
\pagestyle{fancy}

\title{Non-ridge-chordal complexes whose clique complex has shellable Alexander dual}

\author{Bruno Benedetti}
\address{Department of Mathematics, University of Miami, 
Coral Gables, FL 33146}
\email{bruno@math.miami.edu}

\author{Davide Bolognini}
\address{Dipartimento di Matematica, Universit\'a di Bologna, 
Bologna, Italia, 40126}
\email{davide.bolognini2@unibo.it}

\begin{document}

\begin{abstract} A recent conjecture that appeared in three papers by Bigdeli--Faridi,  Dochtermann, and Nikseresht, is that every simplicial complex whose clique complex has shellable Alexander dual, is  ridge-chordal. This strengthens the long-standing Simon's conjecture that the $k$-skeleton of the simplex is extendably shellable, for any $k$. We show that the stronger conjecture has a negative answer, by exhibiting an infinite family of counterexamples. 
\end{abstract}

\maketitle

\section{Introduction}
Shellability is a property satisfied by three important families of objects in combinatorics, namely, polytope boundaries \cite{Zie} (see also \cite{AB}) 
and order complexes of geometric lattices \cite{Bj}. Moreover, skeleta of shellable complexes are themselves shellable \cite{BjWa}.  \emph{Extendable shellability} is the stronger demand that any shelling of any full-dimensional subcomplex  may be continued into a shelling of the whole complex. This property is less understood than shellability, and much less common. It is easy to construct polytopes that are not extendably shellable \cite{Zie}. In 1994 Simon conjectured that, for any integer $0 \le  d \le n$, the $d$-skeleton of the $n$-simplex is extendably shellable \cite[Conjecture 4.2.1]{Simon}. For $d\le2$ this was soon proven by Bj\"orner and Eriksson \cite{BjEr}, but for $3 \le d \le n-3$ the conjecture remains open.

Recently Bigdeli et al. \cite{ur3} and Dochtermann et al. \cite{doc2} established Simon's conjecture for $d \ge n-2$,  showing also that shellability is equivalent to extendable shellability for $d$-complexes with up to $d+3$ vertices \cite{doc2}. Their approach is based on a higher-dimensional extension of the graph-theoretic notion of chordality, called \emph{ridge--chordality}, which we recall below. 
Given a $d$-dimensional pure simplicial complex $\Delta$, any $(d-1)$-dimensional face of it is called a {\em ridge}.  {\em ``Deleting above a ridge''} of $\Delta$ means to consider the simplicial complex whose facets are the facets of $\Delta$ not containing that ridge. A {\em clique} of $\Delta$ is any subset $V \subseteq [n]$ such that  all subsets of $V$ of size $d+1$ appear among the facets of $\Delta$.
For example, if $\Delta$ is the graph $\{12, 23, 13, 14\}$, then $1$, $12$ and $123$ are cliques, whereas $124$ and $1234$ are not. 

A pure $d$-dimensional simplicial complex $\Delta$ is called \emph{ridge-chordal} if $\Delta=\emptyset$ or if it can be reduced to the empty set by repeatedly deleting above a ridge $r$ such that the vertices of the star of $r$ form a clique \cite{ur}. One can see that ``ridge-chordal $1$-complexes'' are precisely the graphs admitting a perfect elimination ordering, i.e. graphs in which every minimal vertex cut is a clique; by Dirac's theorem, these are precisely the ``chordal graphs'', the graphs where every cycle of length at least four has a chord \cite{dir}.  

Now, let $\operatorname{Cl}(\Delta)$ be the ``clique complex'' of $\Delta$, i.e., the simplicial complex whose faces are the cliques of~$\Delta$.
This $\operatorname{Cl}(\Delta)$ is a simplicial complex of dimension at least $d$, with the same $d$-faces of $\Delta$ and the same $(d-1)$-faces of the $n$-simplex. 
The following conjecture appeared naturally, in  several recent works:

\begin{conjecture}[{\cite[Question 6.3]{bigfar}, \cite[Conjecture 4.8]{doc}, \cite[Statement A]{N}}] \label{conj:1} $\qquad$ \par
\noindent If the Alexander dual of $\operatorname{Cl}(\Delta)$ is shellable, then $\Delta$ is ridge-chordal.
\end{conjecture}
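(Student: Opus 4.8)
The displayed statement (Conjecture~\ref{conj:1}) is in fact \emph{false}, so the plan is to refute it by exhibiting, for infinitely many pairs $(d,n)$, a pure $d$-dimensional complex $\Delta$ on $n$ vertices whose clique complex $\operatorname{Cl}(\Delta)$ has shellable Alexander dual while $\Delta$ itself is not ridge-chordal. I would first pin down one small base example and then lift it to an infinite family by a dimension-raising operation. The two things the base example must achieve pull in opposite directions. On one hand, by the standard correspondence between linear quotients and shellability of the Alexander dual, demanding that $\operatorname{Cl}(\Delta)^{\vee}$ be shellable is the same as demanding that $I_{\operatorname{Cl}(\Delta)}$ --- whose minimal generators are exactly the $(d+1)$-subsets of $[n]$ that fail to be facets of $\Delta$ --- have linear quotients; this forces $\operatorname{Cl}(\Delta)$ to be ``collapsible-like'' (in particular every $d$-cycle of every induced subcomplex of $\operatorname{Cl}(\Delta)$ must already be filled in by its higher cliques). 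On the other hand $\Delta$ must be rigid: one wants \emph{no} ridge $r$ of $\Delta$ whose star has a clique as vertex set, so that the ridge-chordal reduction cannot even be started and $\Delta$ is not ridge-chordal --- a condition that is trivial to read off a candidate facet list.

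Reconciling ``collapsible at the level of $\operatorname{Cl}(\Delta)$'' with ``rigid at the level of $\Delta$'' is the heart of the matter, and the part where I expect the real work (and possibly a computer search) to lie: the obvious candidates all fail this tension. For a cross-polytope boundary, for a suspended long cycle, or for a neighbourly torus triangulation, the complex is satisfyingly rigid, but $\operatorname{Cl}(\Delta)$ retains a top-dimensional cycle in some induced subcomplex, so $\operatorname{Cl}(\Delta)^{\vee}$ is not even Cohen--Macaulay; while any cone (or, more generally, anything with enough ``free'' ridges, i.e.\ ridges lying in a unique facet) is automatically ridge-chordal and hence useless. One is therefore led to look for a complex whose $d$-cycles are all locally fillable in $\operatorname{Cl}(\Delta)$ yet which has no free ridge to peel globally; it is also worth recalling that this is exactly the gap between shellability and vertex-decomposability of $\operatorname{Cl}(\Delta)^{\vee}$, since the vertex-decomposable case is (as one expects, and as the partial results toward the conjecture suggest) ridge-chordal.

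Once a base example $\Delta_0$ is in hand, I would produce the infinite family by iterating the suspension $\Delta\mapsto\Sigma\Delta=\Delta*\{p,q\}$ (or a one-point suspension). Non-ridge-chordality survives: the $d$-faces of $\Sigma\Delta$ are the facets of $\Delta$ together with the sets $F\cup p$ and $F\cup q$ for $F$ a ridge of $\Delta$; the star of a facet of $\Delta$ spans a vertex set containing both $p$ and $q$, which is never a clique of $\Sigma\Delta$, and the star of $F\cup p$ is a clique only when $F$ lies in a single facet of $\Delta$, so a base example with no free ridge yields a suspension in which no ridge can be deleted --- and the argument repeats. Shellability of the dual survives too: a short computation gives $\operatorname{Cl}(\Sigma\Delta)=\operatorname{Skel}_d(\sigma_{[n]\cup\{p,q\}})\cup\Sigma\Delta$, so $I_{\operatorname{Cl}(\Sigma\Delta)}$ is the sum of a squarefree Veronese ideal on $[n]$, of $x_p\,I_{\operatorname{Cl}(\Delta)}$, of $x_q\,I_{\operatorname{Cl}(\Delta)}$, and of $x_px_q$ times a squarefree Veronese ideal on $[n]$; concatenating a linear-quotients order of $I_{\operatorname{Cl}(\Delta)}$ with the standard lexicographic orders of the Veronese summands should give a linear-quotients order of $I_{\operatorname{Cl}(\Sigma\Delta)}$. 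The remaining obstacle, besides producing $\Delta_0$, is to carry out this monomial bookkeeping carefully --- the $x_px_q$-summand is the fiddly one --- and to make sure $\Delta_0$ has no free ridges, so that the construction genuinely outputs infinitely many counterexamples to Conjecture~\ref{conj:1}.
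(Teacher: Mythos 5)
Your instinct is exactly right --- Conjecture~\ref{conj:1} is false, and the paper (Theorem~\ref{main}) disproves it by construction --- but your proposal stops precisely where the hard work begins, so it is not a proof.

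The central gap is the base example $\Delta_0$. You list what $\Delta_0$ must do, correctly observe that cross-polytopes, tori, and cones all fail, and then say ``once a base example $\Delta_0$ is in hand\ldots'' without producing one. But producing it is the entire content of the theorem. The paper constructs $\Delta^2_k$ by taking $k$ copies of a shellable, contractible $2$-complex $C_2$ on $7$ vertices that has \emph{exactly one} free ridge (a complex from \cite{ABL}) and identifying their unique free ridges; the resulting complex has no free ridge at all, and Lemma~\ref{lem:existence1} (which rests on Lemma~\ref{lem:existence}: the star of $r$ is a clique iff $r$ is a free face of $\operatorname{Cl}(\Delta)$) then gives non-ridge-chordality, provided one also checks $\dim\Delta^2_k=\dim\operatorname{Cl}(\Delta^2_k)$ via the contractibility of $\Delta^2_k$. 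None of this apparatus appears in your sketch, and without something playing the role of $C_2$ your construction cannot get off the ground. Notice also that the paper's family stays in dimension $2$ with $n=5k+2$ vertices, whereas your iterated suspension would raise the dimension at each step --- a legitimate variant in principle, but a different shape of statement.

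Your second step, lifting by suspension $\Sigma\Delta=\Delta*\{p,q\}$, is a genuinely different route from the paper's ridge-gluing, and the parts you do argue look plausible: the formula $\operatorname{Cl}(\Sigma\Delta)=\operatorname{Skel}_d(\sigma_{[n]\cup\{p,q\}})\cup\Sigma\Delta$ is correct (no $(d+3)$-clique can exist because facets of $\Sigma\Delta$ always contain exactly one of $p,q$), and your analysis of which ridges of $\Sigma\Delta$ have clique-stars is sound. But the linear-quotients concatenation is asserted, not verified; you yourself flag the $x_px_q$-summand as ``the fiddly one.'' The paper's route avoids this bookkeeping entirely by proving $4$-decomposability of the dual $A_k$ directly: it identifies shedding faces $\gamma_1,\gamma_2,\gamma_3$ of dimension $4$, runs an induction on $k$ on the links (each link $L^k_j$ is a copy of $D^{k-1}_j$), and closes the deletion side with a clean structural Lemma~\ref{lem:VD} (pairwise-disjoint minimal non-faces imply vertex-decomposability). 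Your remark that the dual cannot be $0$-decomposable is correct and is exactly Remark~\ref{bigfarapproach} in the paper; the fact that $4$-decomposability can nevertheless be achieved is what makes the construction work, and your proposal contains no mechanism for proving any concrete decomposability statement.

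In short: right diagnosis, right general strategy, wrong level of completeness. To turn this into a proof you would have to (i) actually exhibit a complex with no free ridge whose clique complex has shellable dual --- the paper does this by gluing copies of a one-free-ridge complex --- and (ii) replace the ``should give a linear-quotients order'' sentence by a verified shelling or decomposability argument for the dual.
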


There are three reasons why Conjecture \ref{conj:1} is natural and of interest:
\begin{compactenum}[(1)]
\item As explained by Bigdeli et al.~\cite[Corollary 3.7]{ur3} and \cite[Corollary 4.16]{N}, Conjecture \ref{conj:1} directly implies Simon's conjecture, cf.~Remark \ref{cluttersapproach}. 
\item The conjecture is true if one slightly strengthens the assumption ``shellable'' into ``vertex-decomposable''. This fact is proven in the pure case by Nikseresht \cite[Theorem 3.10]{N}, and in full-generality by Bigdeli--Faridi \cite[Theorem 5.2]{bigfar}; see also Remark \ref{bigfarapproach} below. Also, Conjecture \ref{conj:1} holds for $\dim \Delta = 1$.
\item Some partial converse holds: If $\Delta$ is ridge-chordal, then the Alexander dual of $\operatorname{Cl}(\Delta)$ is Cohen--Macaulay over any field \cite[Theorem 3.2]{ur}, although not necessarily shellable or constructible \cite[Example 3.14]{ur}.
\end{compactenum}

The purpose of this short note is to strongly disprove Conjecture \ref{conj:1}: 

\begin{maintheorem}\label{main} For any $k \geq 2$ there is a constructible $2$-dimensional complex $\Delta_k$ that is not ridge-chordal, such that the Alexander dual of $\operatorname{Cl}(\Delta_k)$ is pure $(5k-2)$-dimensional, shellable, and even $4$-decomposable.
\end{maintheorem}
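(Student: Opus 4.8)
The plan is to construct an explicit infinite family $\Delta_k$ and verify the three required properties in turn: (i) $\Delta_k$ is not ridge-chordal, (ii) $\operatorname{Cl}(\Delta_k)$ has Alexander dual that is pure $(5k-2)$-dimensional and shellable (in fact $4$-decomposable), and (iii) $\Delta_k$ is constructible. The natural starting point is the smallest interesting case: a triangulation of the real projective plane (or the dunce hat, or a Möbius-like object) already fails to be ridge-chordal in a robust way, because deleting any ridge whose star-vertices form a clique would require the local picture to be ``fillable'', and these complexes have no such ridge — every ridge either lies in a $2$-cycle of triangles or its link-vertices fail the clique condition. So I would take $\Delta_2$ to be a minimal triangulation of $\mathbb{RP}^2$ (the $6$-vertex one) or a carefully chosen small $2$-complex, check by hand that \emph{no} ridge has the clique property (hence ridge-chordal reduction is stuck at step one), and then define $\Delta_k$ for $k \ge 3$ by a coning/iterated-construction that preserves non-ridge-chordality while letting the dimension of the Alexander dual grow linearly as $5k-2$.

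Second, for the shellability of the Alexander dual of $\operatorname{Cl}(\Delta_k)$, I would pass to the combinatorial commutative algebra dictionary: the Alexander dual of $\operatorname{Cl}(\Delta_k)$ is shellable (resp.\ vertex-decomposable / $t$-decomposable) iff the Stanley–Reisner ideal $I_{\operatorname{Cl}(\Delta_k)}$ has linear quotients (resp.\ the associated minimal non-faces can be ordered appropriately). Concretely, the non-faces of $\operatorname{Cl}(\Delta_k)$ of size $d+1=3$ are exactly the ``missing triangles'' — triples of vertices that are not facets of $\Delta_k$ — and $\operatorname{Cl}(\Delta_k)$ also contains all edges of the full simplex. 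So the facets of the Alexander dual are the complements of minimal non-faces of $\operatorname{Cl}(\Delta_k)$; I would list these complements explicitly and exhibit a shelling order, then upgrade to a $4$-decomposition by checking that at each step the appropriate link and deletion remain $4$-decomposable, using that a $t$-decomposable complex is $s$-decomposable for $s \ge t$. The dimension count $5k-2$ should fall out of the vertex count of $\Delta_k$ (say $n_k$ vertices) via $\dim(\text{Alexander dual}) = n_k - 3 - 1 = n_k-4$, fixing the growth $n_k = 5k+2$.

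Third, constructibility of $\Delta_k$ itself: I would build $\Delta_k$ as a union $\Delta' \cup \Delta''$ along a constructible (ideally shellable, e.g.\ a disk or ball) subcomplex of the right dimension, invoking the standard fact that if $\Delta', \Delta''$ and $\Delta' \cap \Delta''$ are constructible of dimensions $d$, $d$, $d-1$ then $\Delta' \cup \Delta''$ is constructible. Since every shellable complex is constructible, it suffices to realize $\Delta_k$ by gluing shellable pieces — for instance, writing a triangulated $\mathbb{RP}^2$ as two Möbius-ish disks glued along an annulus, or more generally decomposing the iterated construction so each new ``layer'' attaches along a full subcomplex that is a shellable $1$-complex (a path or tree of edges).

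The main obstacle will be the verification that $\Delta_k$ is \emph{not} ridge-chordal for \emph{all} $k$ simultaneously, not just for the seed $\Delta_2$: ridge-chordality is a global property defined by an elimination process, so I cannot check it locally, and coning/suspension-type operations can destroy non-ridge-chordality if done carelessly (coning a point onto everything tends to \emph{create} cliques). I expect the crux is to identify an invariant of $\Delta_2$ — perhaps a topological obstruction like nonzero $\widetilde H_1(\Delta_2;\ZZ/2)$, or a combinatorial ``no clique-ridge'' certificate — that is provably inherited by every $\Delta_k$ under the chosen construction, so that the inductive step reduces to the base case; making the construction respect that invariant while still forcing the Alexander dual to be shellable is the delicate balance. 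Secondarily, explicitly producing the shelling order of the Alexander dual in a uniform way in $k$ (rather than case by case) will require some bookkeeping, but this I expect to be routine once the right ordering of the minimal non-faces of $\operatorname{Cl}(\Delta_k)$ is guessed.
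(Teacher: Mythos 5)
Your proposal correctly identifies the dimension bookkeeping (the Alexander dual is pure of dimension $n_k - 4$, forcing $n_k = 5k+2$ vertices) and the translation to linear quotients, but it goes wrong at the choice of seed and of iterating operation, and the suggested certificate of non-ridge-chordality is actually incompatible with the constructibility you also need.

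The key issue is that you propose $\widetilde H_1(\Delta_k;\ZZ/2)\neq 0$ as the invariant that would certify non-ridge-chordality, and you suggest $\mathbb{RP}^2_6$ as a seed. Constructible complexes are Cohen--Macaulay, so a constructible $2$-complex has $\widetilde H_1 = 0$; in particular $\mathbb{RP}^2_6$ is not constructible, and no complex with the invariant you propose can be. The paper goes in the opposite direction: the seed $C_2$ is a \emph{shellable, contractible} $2$-complex (on $9$ vertices, from Adiprasito--Benedetti--Lutz) that has exactly one free ridge, and $\Delta_k := \Delta^2_k$ is built by glueing $k$ copies of $C_2$ along their free ridges. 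Contractibility is then used, not sacrificed: it forces $\dim\Delta_k = \dim\operatorname{Cl}(\Delta_k)$ (a $4$-clique would give an induced $\partial\Delta^3$ with nontrivial $H_2$), and once the dimensions match, Lemma~\ref{lem:existence1} reduces ridge-chordality to the existence of a free ridge — which the glued complex lacks by design. So the obstruction is combinatorial (no free ridge) rather than homological, and it coexists happily with constructibility, which is immediate from the glueing.

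The second gap is the ``coning/iterated-construction'' for $k\ge 3$, which you leave unspecified and yourself flag as the crux. The paper's iteration (glue another copy of $C_2$ along the free ridge) is not a cone and does not introduce new cliques, precisely because one attaches along a single $(d-1)$-face. You should also not underestimate the $4$-decomposability step: the paper proves it by induction on $k$, choosing as shedding faces the $4$-element sets $\gamma_j$ of ``private'' vertices of each copy of $C_2$, and carefully verifying the shedding condition for $\gamma_1,\gamma_2,\gamma_3$ before invoking Lemma~\ref{lem:VD} on the residual complex $D^k_3$ (whose minimal non-faces are the disjoint sets $\gamma_1,\gamma_2,\gamma_3$); the base case $k=2$ is checked by machine. ``Guess the right shelling order'' is not a proof, and the ordering is not uniform in an obvious way — the decomposition argument replaces it.
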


Theorem \ref{main} provides a non-trivial class of complexes that are $4$- but not $0$-decomposable (cf.~Remark~\ref{bigfarapproach}) in arbitrarily high dimension. This infinite family does not disprove Simon's conjecture, because the shelling of the Alexander dual of $\operatorname{Cl}(\Delta_k)$, which is $(5k-2)$-dimensional on $5k+2$ vertices, does extend to a shelling of the $(5k-2)$-skeleton of the $(5k+1)$-simplex, as we will see in Remark \ref{nocontro}.

\section{Construction of the counterexamples}
Recall that the link and the deletion of a face $\sigma \in \Delta$ are defined respectively by 
\[  \operatorname{link}_{\Delta}(\sigma):= \{\tau \in \Delta: \sigma \cap \tau = \emptyset,  \sigma \subseteq F \supseteq \tau \textrm{ for some facet } F\} \quad \textrm{ and } \quad
 \operatorname{del}_{\Delta}(\sigma):=\{\tau \in \Delta: \sigma \not \subseteq \tau\}.\]
We say that a face $\sigma$ in a pure simplicial complex $\Delta$ is {\em shedding} if $\operatorname{del}_{\Delta}(\sigma)$ is pure of dimension $\operatorname{dim} \Delta$. An equivalent formulation (see for instance \cite[Definition 3.1]{Wood}) is the following: $\sigma$ is shedding if and only if for every face $F \in \Delta$ such that $\sigma \subseteq F$ and for every $v \in \sigma$, there exists $w \notin F$ such that $(F \setminus \{v\}) \cup \{w\} \in \Delta$. A pure simplicial complex $\Delta$ is {\em $k$-decomposable} if $\Delta$ is a simplex or if there exists a shedding face $\sigma \in \Delta$ with $\dim \sigma \leq k$ such that $\operatorname{link}_{\Delta}(\sigma)$ and $\operatorname{del}_{\Delta}(\sigma)$ are both $k$-decomposable \cite{ProvanBillera}.  It is easy to see  that if $\Delta$ is $k$-decomposable then it is also $t$-decomposable, for every $k \leq t \leq \dim \Delta$. The notion of $k$-decomposable interpolates between vertex-decomposable complexes (which are the same as $0$-decomposable complexes) and shellable complexes (which are the same as $d$-decomposable complexes, where $d$ is their dimension). 

We start with a Lemma that is implicit in the work of Bigdeli--Faridi \cite{bigfar}. Recall that a {\em free face} in a simplicial complex $\Delta$ is a face strictly contained in only one facet of $\Delta$.

\begin{lemma}\label{lem:existence}
Let $r$ be a ridge of a pure $d$-dimensional simplicial complex $\Delta$, with $d \ge 1$. Let $S$ be the set of vertices of $\operatorname{Star}(r, \Delta)$. Then $S \in \operatorname{Cl}(\Delta)$ $ \Longleftrightarrow $  $r$ is a free face in $\operatorname{Cl}(\Delta)$.
\end{lemma}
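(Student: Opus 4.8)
The plan is to chase definitions; the whole statement reduces to a single observation about the cliques of $\Delta$ that contain the ridge $r$. Set $N := \{\, v \in [n] : r \cup \{v\} \text{ is a facet of } \Delta \,\}$. Since $\Delta$ is pure and $d$-dimensional and $r$ is a ridge, the facets of $\Delta$ containing $r$ are exactly the $d$-faces $r \cup \{v\}$ with $v \in N$, and these are the facets of $\operatorname{Star}(r,\Delta)$; hence $S = r \cup N$, and $r \subsetneq S$ since $N \neq \emptyset$. Note also that $r$ has cardinality $d$, so it is vacuously a clique of $\Delta$ and therefore a $(d-1)$-face of $\operatorname{Cl}(\Delta)$; in particular it is contained in some facet of $\operatorname{Cl}(\Delta)$.

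First I would record the one observation that drives everything: \emph{every clique $F$ of $\Delta$ with $r \subseteq F$ satisfies $F \subseteq S$}. Indeed, for each $v \in F \setminus r$ the set $r \cup \{v\}$ has size $d+1$ and lies inside the clique $F$, hence is a facet of $\Delta$, i.e. $v \in N$; so $F \subseteq r \cup N = S$. Conversely, for each $v \in N$ the set $r \cup \{v\}$ is a facet of $\Delta$, hence (having itself as its only $(d+1)$-subset) a clique, hence a face of $\operatorname{Cl}(\Delta)$ that strictly contains $r$. Both implications now follow. If $S \in \operatorname{Cl}(\Delta)$, then $S$ is a clique, and any facet $F$ of $\operatorname{Cl}(\Delta)$ with $r \subseteq F$ satisfies $F \subseteq S$ by the observation; maximality of $F$ forces $F = S$, so $S$ is the \emph{unique} facet of $\operatorname{Cl}(\Delta)$ through $r$ and $r$ is free. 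If instead $r$ is free, let $F$ be the unique facet with $r \subsetneq F$; for every $v \in N$ the face $r \cup \{v\}$ of $\operatorname{Cl}(\Delta)$ lies in some facet, which must be $F$, so $v \in F$; hence $S = r \cup N \subseteq F$, while $F \subseteq S$ by the observation, so $S = F \in \operatorname{Cl}(\Delta)$.

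This is a pure unwinding of definitions, so I do not anticipate a genuine obstacle; the only points needing care are the bookkeeping with purity and dimension — namely that the facets of $\operatorname{Star}(r,\Delta)$ are precisely the sets $r \cup \{v\}$ with $v \in N$, so that $S = r \cup N$, and that such a set $r \cup \{v\}$, although of dimension $d$ rather than $d-1$, really is a face of $\operatorname{Cl}(\Delta)$ because it is a facet of $\Delta$. Once these are pinned down, the equivalence drops out immediately.
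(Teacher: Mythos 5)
Your proof is correct and follows essentially the same definitional route as the paper's: both arguments hinge on the two facts that any clique of $\Delta$ containing $r$ must lie inside $S$, and that for each neighbor $v$ the set $r\cup\{v\}$ is a face of $\operatorname{Cl}(\Delta)$ strictly containing $r$. You organize this a bit more cleanly by isolating the observation $F\subseteq S$ up front and deducing in one stroke that $S$ is the unique facet through $r$, whereas the paper instead takes two hypothetical facets $F_1,F_2\supseteq r$ and shows they coincide; the difference is purely presentational.
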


\begin{proof}
\begin{compactdesc}
\item[$\Rightarrow$] If $r$ lies in two facets $F_1$ and $F_2$ of $\operatorname{Cl}(\Delta)$, then $F_i=r \cup S_i$ for some $S_i \subseteq [n]$. Since $F_1,F_2 \in \operatorname{Cl}(\Delta)$, for every $s \in S_1 \cup S_2$ we have $r \cup \{s\} \in \Delta$. So $r \cup (S_1 \cup S_2) \subseteq S$ is a clique of $\Delta$. Since $r \cup S_1$ and  $r \cup S_2$ are both facets of $\operatorname{Cl}(\Delta)$, we have $S_1=S_2$, whence $F_1=F_2$.
\item[$\Leftarrow$]  Let $F$ be the unique facet of $\operatorname{Cl}(\Delta)$ that contains $r$. Were there a vertex $s$ of $S$ outside $F$, we would have $r \cup \{s\} \in \Delta \subseteq \operatorname{Cl}(\Delta)$; so there would be $G \in \operatorname{Cl}(\Delta)$, $G \neq F$, such that $r \cup \{s\} \subseteq G$, a contradiction. Hence $S \subseteq F$ and $S \in \operatorname{Cl}(\Delta)$. 
\qedhere
\end{compactdesc}
\end{proof}

\begin{lemma}\label{lem:existence1} Let $\Delta$ be a pure simplicial complex. If $\Delta$ is ridge-chordal and $\dim \Delta = \dim \operatorname{Cl}(\Delta)$, then $\Delta$ has at least one free ridge.
\end{lemma}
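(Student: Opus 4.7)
The plan is to extract $r$ directly from the ridge-chordal reduction of $\Delta$ and then apply Lemma~\ref{lem:existence} together with the dimension hypothesis. By definition of ridge-chordal, at the first step of any reduction sequence there must exist a ridge $r$ of $\Delta$ whose star in $\Delta$ has vertex set equal to a clique of $\Delta$. The ``$\Rightarrow$'' direction of Lemma~\ref{lem:existence} then tells me that $r$ is a free face in $\operatorname{Cl}(\Delta)$; let $F$ denote the unique facet of $\operatorname{Cl}(\Delta)$ containing $r$.

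Next I would upgrade freeness in $\operatorname{Cl}(\Delta)$ to freeness in $\Delta$. Since $r$ is a ridge of the pure $d$-complex $\Delta$, at least one facet $G$ of $\Delta$ contains $r$. Each such $G$ is tautologically a clique of $\Delta$, so $G\in\operatorname{Cl}(\Delta)$; and because the hypothesis $\dim\operatorname{Cl}(\Delta)=d$ rules out any clique of size greater than $d+1$, no proper superset of $G$ lies in $\operatorname{Cl}(\Delta)$, so $G$ is in fact a facet of $\operatorname{Cl}(\Delta)$ containing $r$. The uniqueness of $F$ forces $G=F$, which means $r$ is contained in exactly one facet of $\Delta$; that is the desired free ridge.

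I do not anticipate a serious obstacle here: Lemma~\ref{lem:existence} already does the substantive work, and the equality $\dim\Delta=\dim\operatorname{Cl}(\Delta)$ is precisely the hypothesis needed to transport freeness from the clique complex back to $\Delta$ itself. The only bookkeeping point is to observe that every $d$-face of $\Delta$ is automatically a facet of $\operatorname{Cl}(\Delta)$ under the dimension assumption, which follows at once from the definition of the clique complex.
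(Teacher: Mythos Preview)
Your proof is correct and follows essentially the same route as the paper: extract a ridge $r$ from the first step of the ridge-chordal reduction, invoke Lemma~\ref{lem:existence} to obtain freeness of $r$ in $\operatorname{Cl}(\Delta)$, and then use $\dim\Delta=\dim\operatorname{Cl}(\Delta)$ to pull freeness back to $\Delta$. The only cosmetic difference is that the paper phrases the last step as ``$\Delta$ and $\operatorname{Cl}(\Delta)$ have the same free ridges,'' whereas you argue explicitly that every facet $G$ of $\Delta$ through $r$ is a facet of $\operatorname{Cl}(\Delta)$ and hence must equal $F$; these are two wordings of the same observation.
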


\begin{proof}
If $\Delta$ is ridge-chordal, then it must have a ridge $r$ such that the vertices of $\operatorname{Star}(r, \Delta)$ form a clique. By Lemma \ref{lem:existence}, this $r$ is a free face of  $\operatorname{Cl}(\Delta)$. But since $\mathrm{dim} \Delta =\mathrm{dim} \operatorname{Cl}(\Delta)$, the set of free ridges of $\Delta$ coincides with the set of $(\operatorname{dim} \Delta-1)$-dimensional free faces of $\operatorname{Cl}(\Delta)$, because any ridge we add when passing from $\Delta$ to $\operatorname{Cl}(\Delta)$ belongs to no face of dimension equal to $\operatorname{dim} \Delta$. 
\end{proof}

\begin{figure}[htb]
  \centering
\includegraphics[width=0.5\linewidth]{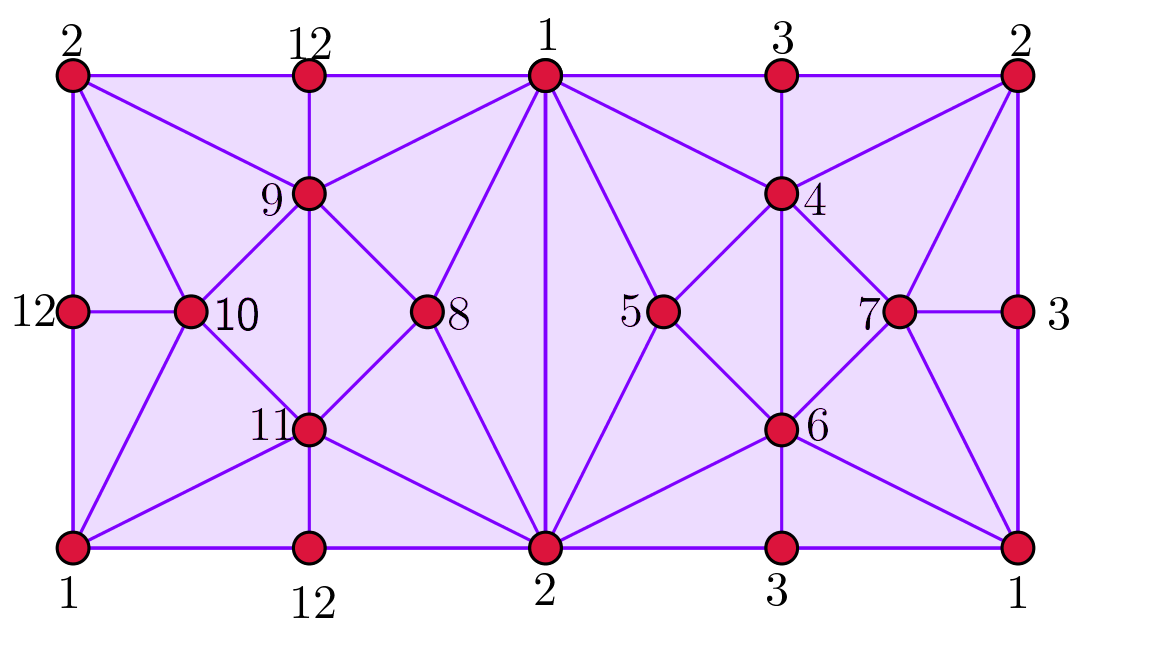} 
\vskip-2mm
  \caption{The constructible contractible $2$-complex $\Delta^2_2$ 
   without free edges, constructed  by Barmak in \cite[Example 11.2.9]{BA}, is not ridge-chordal: See  Proposition \ref{prop:firstpart} below. }
\end{figure}

Recall that a pure $d$-dimensional complex $\Delta$ with $N$ facets is \emph{constructible} if either $d(N-1)=0$, or if $\Delta$ splits as $\Delta = \Delta_1 \cup \Delta_2$, with $\Delta_1$, $\Delta_2$ constructible $d$-dimensional complexes and $\Delta_1 \cap \Delta_2$ constructible $(d-1)$-dimensional. All shellable complexes are constructible, but the converse is false, cf.~e.g.~\cite[Prop.~6.7]{BL}.

\begin{proposition} \label{prop:firstpart} For any integers $d, k \ge 2$ there  is a constructible, contractible $d$-dimensional complex 
on $k2^d + k + d$ vertices that is not ridge-chordal. 
\end{proposition}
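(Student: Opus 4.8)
The plan is to produce $\Delta:=\Delta^d_k$ by gluing $k$ copies of a single gadget along a common face. Fix a $(d-1)$-simplex $\Sigma$ on $d$ ``spine'' vertices. The gadget $\Gamma$ is the heart of the matter: it should be a $d$-dimensional, constructible, contractible simplicial complex, containing $\Sigma$ as a face, on the $d$ vertices of $\Sigma$ together with $2^d+1$ further vertices, and with \emph{no free ridge}. Concretely one wants a ``$d$-dimensional Bing's house of controlled size'': a triangulated $d$-cube (whence the $2^d$) equipped with internal walls and tunnels in the style of Bing's house with two rooms, arranged so that every $(d-1)$-face lies in at least two facets, plus a few more vertices to accommodate $\Sigma$ and to keep the triangulation genuinely simplicial. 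I expect constructing $\Gamma$ — and doing so within the prescribed vertex budget, in every dimension $d\ge 2$ — to be the main obstacle: this is exactly the place where the ``Simon-type'' tension lives, since a complex that is at once constructible (hence built by gluing balls along balls, which tends to leave free faces) and devoid of free ridges (hence locally ``closed up'') can exist only by being non-manifold in the specific way Bing's house is.

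Granting $\Gamma$, the assembly and its verification are routine. I would take pairwise ``almost disjoint'' copies $\Gamma_1,\dots,\Gamma_k$ of $\Gamma$ — sharing exactly the $d$ spine vertices, so that $\Gamma_i\cap\Gamma_j=\Sigma$ for $i\ne j$ — and set $\Delta:=\Gamma_1\cup\cdots\cup\Gamma_k$, which has $k(2^d+1)+d=k2^d+k+d$ vertices. Constructibility then follows by induction on $j$: if $\Delta^{(j)}:=\Gamma_1\cup\cdots\cup\Gamma_j$ is constructible, then $\Delta^{(j)}\cap\Gamma_{j+1}=\Sigma$ is a constructible $(d-1)$-complex, so $\Delta^{(j+1)}$ is constructible; the base case is the constructibility of $\Gamma$ itself. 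Contractibility follows similarly: $\Delta$ is obtained by repeatedly gluing contractible complexes along the contractible subcomplex $\Sigma$, so a Mayer--Vietoris and a van Kampen induction give $\widetilde H_\ast(\Delta)=0$ and $\pi_1(\Delta)=0$, whence $\Delta$ is contractible by Hurewicz and Whitehead.

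It remains to see that $\Delta$ is not ridge-chordal. Every ridge of $\Delta$ is a ridge of some $\Gamma_i$: if it meets a private vertex of $\Gamma_i$ then every facet containing it lies in $\Gamma_i$, and otherwise it equals $\Sigma$, which lies in facets of every $\Gamma_i$; in all cases it lies in at least two facets of $\Delta$ because $\Gamma$ has no free ridge. So $\Delta$ has no free ridge. On the other hand, we may assume that no $d+2$ vertices of $\Delta$ have all of their $(d+1)$-subsets among the facets (if $\Gamma$ does not already guarantee this, replace it by a subdivision, which disturbs none of its properties), so that $\dim\operatorname{Cl}(\Delta)=d=\dim\Delta$. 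By Lemma~\ref{lem:existence1}, a ridge-chordal pure complex of this dimension must have a free ridge; since $\Delta$ has none, $\Delta$ is not ridge-chordal.
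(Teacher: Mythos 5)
Your overall plan mirrors the paper's: glue $k$ copies of a fixed $d$-dimensional gadget along a common $(d-1)$-face, check constructibility and contractibility by induction on the gluing, and then conclude non-ridge-chordality via Lemma~\ref{lem:existence1} once you know $\Delta$ has no free ridge and $\dim\operatorname{Cl}(\Delta)=d$. However, there is a genuine gap, and it is precisely the one you flag yourself: you never construct the gadget, and the gadget you ask for is considerably harder to produce than what the argument actually needs.

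You require a constructible, contractible $d$-complex $\Gamma$ on $2^d+d+1$ vertices with \emph{no} free ridge. The paper's gadget $C_d$ (from Adiprasito--Benedetti--Lutz) is a shellable, contractible $d$-complex on $2^d+d+1$ vertices with \emph{exactly one} free ridge. The trick you are missing is that one should glue the $k$ copies \emph{along the unique free ridge}: after identifying the $k$ free ridges, the shared ridge lies in $k\ge 2$ facets, and every other ridge was already non-free in its copy, so the union has no free ridge at all. This sidesteps the need for a ridge-free gadget. A constructible contractible complex with no free ridge whatsoever (your ``$d$-dimensional Bing's house within vertex budget'') is a much stronger demand; for $d=2$ you would need one on $7$ vertices, which I do not believe exists, and in any case you offer no construction. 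Since the entire proof rests on the gadget, this is the step where the argument is incomplete.

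A secondary problem: to arrange $\dim\operatorname{Cl}(\Delta)=d$ you suggest replacing $\Gamma$ by a subdivision ``if necessary.'' Subdividing changes the number of vertices, so this would destroy the exact vertex count $k2^d+k+d$ that the proposition asserts. The paper instead deduces $\dim\operatorname{Cl}(\Delta^d_k)=d$ directly from contractibility: a clique of size $t+1>d+1$ would mean $\Delta^d_k$ contains, as an induced subcomplex, the full $d$-skeleton of a $t$-simplex, whose nontrivial $d$-th homology would survive into $H_d(\Delta^d_k)$ (there being nothing in dimension $d+1$ to kill it), contradicting contractibility. This argument costs nothing and keeps the vertex count intact; you should use it rather than subdivide.
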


\begin{proof}
For any $d \ge 2$, there exists a shellable contractible simplicial $d$-complex $C_d$ on $2^d+d+1$ vertices that has only one free ridge \cite{ABL}. Let $\Delta^d_{k}$ be the $d$-complex obtained by glueing together $k$ copies of the complex $C_d$ via the identification of their free ridges; the case $d=k=2$ is illustrated in Figure 1, and appeared also in Barmak's book \cite[Example 11.2.9]{BA}. 
By definition, $\Delta^d_k$ is constructible and has $k(2^d+d+1)-(k-1)d = k2^d + k + d$ vertices. By van Kampen's theorem, $\Delta^d_k$ is contractible. We claim that $\dim \Delta^d_k = \dim \operatorname{Cl}(\Delta^d_k)$. In fact, were there a face in $\operatorname{Cl}(\Delta^d_k)$ of dimension $t >d$, then $\Delta^d_k$ would contain an induced subcomplex $S$ on $t+1$ vertices with the same $d$-skeleton of the $t$-simplex. So $S$ would have nontrivial $d$-th homology, against the contractibility of $\Delta^d_k$. This proves the claim. But since $\Delta^d_k$ has no free ridge, it is neither ridge-chordal (by Lemma \ref{lem:existence1}) nor shellable (because all shellable contractible complexes are collapsible, cf.~\cite[Lemma 17]{DON}).  
\end{proof}

All minimal non-faces of $\operatorname{Cl}(\Delta^d_k)$ have dimension $d$. So the Alexander dual of $\operatorname{Cl}(\Delta^d_k)$ is pure $(k2^d+k-2)$-dimensional, with $k2^d + k + d$ vertices and $\binom{k2^{d}+d+k}{d+1}-f_d(\Delta^d_k)$ facets. 
To disprove Conjecture A, it remains to find values of $d$ and $k$ for which
the Alexander dual of $\operatorname{Cl}(\Delta^d_k)$ is shellable. Already for $d=2$ and $k=2$ this is computationally difficult, and beyond the reach of our computer. But we shall now use the theoretical trick of face-decomposability to establish shellability for $d=2$ and arbitrary $k$. 

\begin{lemma}\label{lem:VD} Let $\Delta$ be a pure simplicial complex on $[n]$. Suppose that the minimal non-faces $N_1,\ldots,N_t$ of $\Delta$ have the property that $N_j \cap N_h=\emptyset$ for every $j \neq h$. Then $\Delta$ is vertex-decomposable.
\end{lemma}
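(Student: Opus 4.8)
The plan is to show that the pairwise-disjointness hypothesis forces $\Delta$ to split as a join of a full simplex with boundaries of simplices, and then to invoke the fact that vertex-decomposability is preserved under taking joins. Throughout I use that the minimal non-faces determine $\Delta$: a subset of $[n]$ is a face of $\Delta$ precisely when it contains no $N_i$.

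First I would set $V := N_1 \cup \cdots \cup N_t$ and $W := [n]\setminus V$. Every vertex $w \in W$ is a cone point of $\Delta$: if $F \in \Delta$, then $F \cup \{w\}$ still contains no $N_i$ (as $w$ lies in none of them), so $F\cup\{w\}\in\Delta$. Hence $\Delta = \langle W\rangle * \Delta'$, where $\langle W\rangle$ is the full simplex on $W$ and $\Delta' := \{F\in\Delta : F\subseteq V\}$. Next, since the $N_i$ are pairwise disjoint, a subset $F\subseteq V$ contains some $N_i$ if and only if $F\cap N_i = N_i$ for some $i$; equivalently, $F\in\Delta'$ iff $F\cap N_i \subsetneq N_i$ for every $i$. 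This is exactly the statement that $\Delta' = \partial\sigma_1 * \cdots * \partial\sigma_t$, where $\sigma_i$ is the full simplex on the vertex set $N_i$ and $\partial\sigma_i$ its boundary. (Note that this already makes $\Delta$ pure of dimension $n-t-1$, so the purity hypothesis is only for convenience; a degenerate $|N_i|=1$, meaning a vertex of $[n]$ not used by $\Delta$, and the case $W=\emptyset$ cause no trouble.)

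It then remains to recall two standard facts: (i) a full simplex and the boundary of a simplex are vertex-decomposable, and (ii) a join of vertex-decomposable complexes is vertex-decomposable. For (i), a simplex is $0$-decomposable by definition; and for $\partial\sigma_i$ with $|N_i|=m\ge 2$, any $v\in N_i$ is a shedding vertex, since $\operatorname{del}_{\partial\sigma_i}(v)$ is the full simplex on $N_i\setminus\{v\}$ (pure of dimension $m-2=\dim\partial\sigma_i$) while $\operatorname{link}_{\partial\sigma_i}(v)$ is the boundary of that simplex, both vertex-decomposable, the latter by induction on $m$. For (ii), given vertex-decomposable $\Delta_1,\Delta_2$: if both are simplices then $\Delta_1*\Delta_2$ is a simplex; otherwise pick a shedding vertex $v$ of a factor that is not a simplex, say $\Delta_1$, and observe that $v$ is shedding in $\Delta_1*\Delta_2$ with $\operatorname{del}_{\Delta_1*\Delta_2}(v)=\operatorname{del}_{\Delta_1}(v)*\Delta_2$ and $\operatorname{link}_{\Delta_1*\Delta_2}(v)=\operatorname{link}_{\Delta_1}(v)*\Delta_2$, both vertex-decomposable by induction on the total number of vertices. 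Applying (i) and (ii) to $\Delta=\langle W\rangle*\partial\sigma_1*\cdots*\partial\sigma_t$ gives the claim. (Alternatively, (i) and (ii) can be quoted directly from Provan--Billera.)

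I do not expect a genuine obstacle here: the only real content is the structural observation in the second paragraph that pairwise-disjoint minimal non-faces make $\Delta$ a join of a simplex with simplex boundaries, after which everything is folklore. The single point needing a little care is the bookkeeping of the degenerate cases ($W=\emptyset$, $|N_i|=1$, or $t=0$), which are all harmless.
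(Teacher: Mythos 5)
Your proof is correct and takes essentially the same route as the paper's: both identify $\Delta$ as the join of the full simplex on the vertices outside all $N_i$ with the boundaries $\partial N_1 * \cdots * \partial N_t$, and then invoke that joins of vertex-decomposable complexes are vertex-decomposable. The only cosmetic difference is that the paper splits into the cases $\max|N_i|=1$ versus $\max|N_i|>1$, whereas you handle the degenerate cases uniformly and additionally spell out the standard facts that the paper leaves as folklore.
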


\begin{proof} 
Let $m:=\mathrm{max} \{|N_i|\}_{1 \leq i \leq t}$ and $V:=[n] \setminus \bigcup_{i=1}^t N_i$. If $m=1$, then $|N_i|=1$ for every $1 \leq i \leq t$. So

\[ \Delta= 
\begin{cases}
\{\emptyset\} & \textrm{ if } V=\emptyset\\
\textrm{a simplex} & \textrm{ if } V \ne \emptyset.
\end{cases}
\]
 
Either way, $\Delta$ is vertex-decomposable and we are done. Now suppose $m>1$ and denote by $\partial N_i$ the boundary of a simplex on the vertices of $N_i$. Then
\[ \Delta= 
\begin{cases}
\partial N_1 * \cdots * \partial N_t & \textrm{ if } V=\emptyset\\
V * \partial N_1 * \cdots * \partial N_t & \textrm{ if } V \ne \emptyset,
\end{cases}
\]

where $*$ denotes the join of simplicial complexes on disjoint sets of vertices. 
Either way, $\Delta$ is the join of vertex-decomposable complexes, hence vertex-decomposable.
\end{proof}

\begin{proof}[\bf Proof of Theorem A]
Let $k \geq 2$ and let $A_k$ be the Alexander dual of $\operatorname{Cl}(\Delta^2_k)$. Since all minimal non-faces of $\operatorname{Cl}(\Delta^2_k)$ have dimension $2$, this $A_k$ is pure $(5k-2)$-dimensional, with $n:=5k+2$ vertices and $\binom{5k+2}{3}-13k$ facets. 
Let $\gamma_j$ be the set of vertices in the $j$-th copy of $C_2$ that do not belong to the free face. Then $[n] \setminus \gamma_j$ is not in $\operatorname{Cl}(\Delta_k^2)$, because $\dim ([n] \setminus \gamma_j) =5(k-1)+1>2=\dim \operatorname{Cl}(\Delta_k^2)$. So $\gamma_j \in A_k$, for all $1 \leq j \leq k$. Define
\[ D^k_{0}:=A_k, \quad  D^k_{j}:=\operatorname{del}_{D^k_{j-1}}(\gamma_j), \quad \textrm{ and } L^k_{j}:=\operatorname{link}_{D^k_{j-1}}(\gamma_j), \quad \textrm{ for } 1 \le j \le k.\] 
If $j>1$ and $t \geq j$, we have $\gamma_t \in D_{j-1}^k$, because $\gamma_h \not \subseteq \gamma_t$, for every $h \leq j-1$. Moreover, if $k>2$, $\gamma_{j-1} \cup \gamma_j \in D_{j-2}^k$, i.e. $\gamma_{j-1} \in \operatorname{link}_{D_{j-2}^k}(\gamma_j)$, because $\dim ([n] \setminus (\gamma_{j-1} \cup \gamma_j)) =5(k-2)+1>2=\dim \operatorname{Cl}(\Delta_k^2)$.

We are going to show that $A_k$ is $4$-decomposable by induction on $k \geq 2$. Let $k=2$. We checked using  \cite{cook} that $D_1^2$ and $D_2^2$ are pure $8$-dimensional. Moreover, we checked that $L_1^2 \simeq L_2^2 \simeq A_1$, where $A_1$ is the Alexander dual of $\operatorname{Cl}(C_2)$. The reader may verify that a shelling for such $3$-complex is 
\begin{center} 
$[4,5,6,7],[3,5,6,7],[2,4,6,7],[1,4,6,7],[1,3,6,7],[1,2,6,7],[3,4,5,7],[1,3,5,7],$
$[1,2,5,7],[2,3,5,7],[2,3,4,7],[1,2,4,7],[3,4,5,6],[2,3,4,6],[2,3,5,6],[1,2,5,6],$
$[1,3,4,6],[1,2,4,6],[1,2,3,6],[1,3,4,5],[1,2,4,5],[1,2,3,4].$
\end{center}
Since $D_2^2$ is vertex-decomposable, it follows that $A_2$ is $4$-decomposable.

\noindent Now let $k>2$. Notice that $\operatorname{link}_{A_k}(\gamma_j) \simeq A_{k-1}$, for every $j$, where `$\simeq$' stands for `combinatorially equivalent'. In particular, $L_1^k \simeq A_{k-1}$. In general, we have $L^k_{j} \simeq D^{k-1}_{j-1}$. We proceed by induction on $j$. Let $j>1$. We have  $$L_j^k=\mathrm{link}_{D^k_{j-1}}(\gamma_j)=\mathrm{link}_{\mathrm{del}_{D^k_{j-2}}(\gamma_{j-1})}(\gamma_j)=\mathrm{del}_{\mathrm{link}_{D^k_{j-2}}(\gamma_{j})}(\gamma_{j-1}) \simeq \mathrm{del}_{D^{k-1}_{j-2}}(\gamma_{j-1})=D^{k-1}_{j-1},$$ where the combinatorial equivalence is ensured by $\mathrm{link}_{D^k_{j-2}}(\gamma_{j}) \simeq L_{j-1}^k \simeq D^{k-1}_{j-2}$. Moreover, the third equality holds because, for every $G \in \Delta$ and $F \in \operatorname{link}_{\Delta}(G)$, we have $\operatorname{link}_{\operatorname{del}_{\Delta}(F)}(G)=\operatorname{del}_{\operatorname{link}_{\Delta}(G)}(F)$.
We have to verify that for $j=1,2,3$, $\gamma_j$ is a shedding face of $D_{j-1}^k$. Here is a proof: 
 \begin{compactitem}
\item Let $F=[n] \setminus S$ be a facet of $A_k=D^k_0$ containing $\gamma_1$. Let $w \in \gamma_1$. We claim that there exists $s \in S$ such that $\{s,w\} \notin \Delta_k^2$. In fact, $S \cap \gamma_j \neq \emptyset$ for some $j \geq 2$, otherwise $\bigcup_{j=2}^k \gamma_j \subseteq F$. Let $r$ be the free ridge of $C_2$. Hence $S \subseteq r \cup \gamma_1$ and $S \cap \gamma_1 \neq \emptyset$, a contradiction. Let $v \in S \setminus \{s\}$ and we have $(F \setminus \{w\}) \cup \{v\} \in A_k$, because $(S \setminus \{v\}) \cup \{w\} \notin \operatorname{Cl}(\Delta_k^2)$. 
\item Let $F=[n] \setminus S$ be a facet in $D^k_1$ containing $\gamma_2$. Let $w \in \gamma_2$. Notice that $S \cap \gamma_1 \neq \emptyset$. Let $s \in S \cap \gamma_1$ and consider $v \in S \setminus \{s\}$. We have $(F \setminus \{w\}) \cup \{v\} \in D^k_1$. In fact, $(S \setminus \{v\}) \cup \{w\} \notin \operatorname{Cl}(\Delta_k^2)$, because $\{s,w\} \notin \Delta_k^2$, and $[(S \setminus \{v\}) \cup \{w\}]\cap \gamma_1 \neq \emptyset$.  
\item Let $F=[n] \setminus S$ be a facet in $D^k_2$ containing $\gamma_3$. Let $w \in \gamma_3$. Notice that $S \cap \gamma_1 \neq \emptyset$ and $S \cap \gamma_2 \neq \emptyset$. Let $s_i \in S \cap \gamma_i$, for $i=1,2$, and consider $v \in S \setminus \{s_1,s_2\}$. We have $(F \setminus \{w\}) \cup \{v\} \in D^k_2$. In fact, $(S \setminus \{v\}) \cup \{w\} \notin \operatorname{Cl}(\Delta_k^2)$, because $\{s_1,s_2\} \notin \Delta_k^2$, and $[(S \setminus \{v\}) \cup \{w\}]\cap \gamma_i \neq \emptyset$, for $i=1,2$.          
\end{compactitem}
Now we are ready to conclude. 

Since $L_j^k \simeq D^{k-1}_{j-1}$, the complexes $L_j^k$ are $4$-decomposable for $1 \leq j \leq 3$, by the inductive assumption. 
The unique minimal non-faces of $D^k_{3}$ are $\{\gamma_1,\gamma_2,\gamma_3\}$, because the set of facets of $D_3^k$ is \[\{[n] \setminus S \in A_k: |S|=3, |S \cap \gamma_j|=1, j=1,2,3\}.\] Since $\{\gamma_1,\gamma_2,\gamma_3\}$ are disjoint, then $D^k_{3}$ is vertex-decomposable by Lemma \ref{lem:VD}. Hence $A_k$ is $4$-decomposable, as desired. 
\end{proof}

\enlargethispage{3mm}

\begin{remark}\label{bigfarapproach} By the work of Bidgeli, Faridi \cite{bigfar} and Nikseresht \cite{N} there cannot be any $0$-decomposable counterexample to Conjecture \ref{conj:1}. To see this, recall that the $d$-closure of a pure $d$-dimensional simplicial complex $\Delta$ (see \cite[Definition 2.1]{bigfar}) is exactly the clique complex $\operatorname{Cl}(\Delta)$. Hence, by \cite[Proposition 2.7]{bigfar} and \cite[Theorem 3.4]{bigfar}, the following properties  are equivalent: 
\begin{compactitem}
\item $\Delta$ is ridge-chordal;
\item $\operatorname{Cl}(\Delta)$ is $d$-chordal, in the sense of Bigdeli-Faridi \cite[Definition 2.6]{bigfar};
\item $\operatorname{Cl}(\Delta)$ is $d$-collapsible, in the sense of Wegner \cite{weg}. 
\end{compactitem}
Now, let $\Delta $ be a complex such that the Alexander dual of $\operatorname{Cl}(\Delta)$ is $0$-decomposable. By \cite[Theorem 5.2]{bigfar}, the complex $\operatorname{Cl}(\Delta)$ is $d$-chordal; so by the equivalence above, $\Delta$ is ridge-chordal and Conjecture \ref{conj:1} holds.
En passant, this also explains why Conjecture \ref{conj:1} is  equivalent to \cite[Question 6.3]{bigfar}. 
Our complex $\Delta^2_2$ of Figure 1 is not ridge-chordal, so in particular $\operatorname{Cl}(\Delta^2_2)$ is not $2$-chordal. 
\end{remark}

\begin{remark}\label{cluttersapproach} In the literature, the problems we discussed are often phrased in terms of ``clutters''. Let $d \geq 1$ be an integer. A $d$-\emph{uniform clutter} $\mathcal{C}$ is the collection of the facets of a pure $(d-1)$-dimensional simplicial complex $\Gamma_{\mathcal{C}}$. Denote by $I(\mathcal{C})$ the \emph{edge ideal} of $\mathcal{C}$. Let 
$\overline{\mathcal{C}}$ be the clutter with vertices $1, \ldots, n$ whose edges are the $(d-1)$-dimensional non-faces of $\Gamma_{\mathcal{C}}$. It is easy to see  that the edge ideal of $\overline{\mathcal{C}}$ is the Stanley--Reisner ideal of $\operatorname{Cl}(\Gamma_{\mathcal{C}})$.
Moreover, the ridge-chordality of $\Gamma_{\mathcal{C}}$ is equivalent to the chordality of $\mathcal{C}$, as defined in \cite{ur}.
With this terminology, Conjecture \ref{conj:1} can be rephrased as 

\begin{center}
``\emph{For $d \geq 2$, if  $\mathcal{C}$ is a $d$-uniform clutter such that $I(\overline{\mathcal{C}})$ has linear quotients, then $\mathcal{C}$ is chordal.}''
\end{center}

Theorem \ref{main}, forgetting the constructibility and the $4$-decomposability claims, could be then stated as
\begin{center} ``\emph{Infinitely many $3$-uniform clutters $\mathcal{C}$ such that $I(\overline{\mathcal{C}})$ has linear quotients, are not chordal.}''
\end{center} 
\end{remark}

\begin{remark}
Ridge-chordality was introduced in \cite{ur} with the goal to extend  Fr\"oberg's characterization of the squarefree monomial ideals with $2$-linear resolutions \cite{froberg}. This notion was also implicit in \cite[Section 6.2]{ANS} and \cite{porto}. Several other higher-dimensional extensions of graph chordality exist in the literature: see for instance \cite{Adiprasito}, \cite{Emtander}, \cite{vt}, \cite{Wood}. A weakening of ridge-chordality is the demand that  $I(\overline{\Delta})$ have a linear resolution over any field \cite[Theorem 3.2]{ur}, where  $\overline{\Delta}$ is the complex whose facets are the $d$-dimensional non-faces of $\Delta$. As shown by \cite[Example 4.8]{bigfar} or by our complex $\Delta^2_2$ of Figure 1, some complexes satisfying this property are not ridge-chordal. 
En passant, this clarifies what is new in Proposition \ref{prop:firstpart}: 
examples of constructible and even shellable
non-ridge-chordal complexes were previously known, but they are not contractible, see for instance \cite[Exercise 7.37, pag. 277]{bj}. Examples of contractible non-ridge-chordal complexes were also known, like  \cite[Example 4.8]{bigfar}, but they are not constructible.
\end{remark}

\begin{remark}\label{nocontro}
Let $\Delta$ be a pure $d$-complex on $n+1$ vertices such that $\dim \Delta = \dim \operatorname{Cl}(\Delta)$. We claim that if the Alexander dual of $\operatorname{Cl}(\Delta)$ is shellable, then the shelling extends to  the $(n-d-1)$-skeleton of the $n$-simplex. In fact, all the minimal non-faces of $\operatorname{Cl}(\Delta)$ have cardinality $d+1$. Hence the Alexander dual $A$ of $\operatorname{Cl}(\Delta)$ has dimension $k-1$, where $k=n-d$. Moreover, the $(k-2)$-skeleton of $A$ is the $(k-2)$-skeleton of the $n$-simplex. By contradiction, let $N$ be a minimal non-face of $A$, with $|N|<k$. Then $[n+1] \setminus N$ is a facet of $\operatorname{Cl}(\Delta)$ of cardinality $|[n+1]-N|=n+1-|N|>n+1-k=d+1$ and $\dim \operatorname{Cl}(\Delta)>d$.
This implies that all the missing facets of $A$ of dimension $k-1$ can be attached along their whole boundary to extend the shelling. 
\end{remark}

\section{Open problems}
We conclude proposing   two questions:

\begin{question}
Is it true that the Alexander dual of $\operatorname{Cl}(\Delta^d_k)$ is $2^d$-decomposable?
\end{question}
 
 \begin{question}
If both $\Delta$ and the Alexander dual of $\operatorname{Cl}(\Delta)$ are shellable, is it true that $\Delta$ is ridge-chordal?
\end{question}
 
\section*{Acknowledgments} We are grateful to Anton Dochtermann and Joseph Doolittle for useful comments, and to the referees for useful corrections and references. The first author is supported by NSF Grant 1855165.

\end{document}